\documentclass[11pt,a4paper,reqno]{amsart}
\usepackage[utf8]{inputenc}
\usepackage[main=english,french]{babel}
\usepackage{geometry}       
	\geometry{centering}        
	\geometry{top=3.5cm,bottom=3.5cm,left=2.8cm,right=2.8cm}
\usepackage[all]{xy}
\usepackage{amsmath,amsfonts,amssymb,amsthm}
\usepackage{mathrsfs}          

\usepackage{color}
\definecolor{bluegreen2}{RGB}{0, 85, 127}

\usepackage[linktocpage]{hyperref} 
\hypersetup{colorlinks=true,linkcolor=blue,citecolor=bluegreen2,filecolor=magenta,urlcolor=blue}

\usepackage{mysymbolsthms}
\usepackage{multiabstract}

\AtBeginDocument{}


\usepackage{color}
\usepackage[pdftex]{graphicx}
\usepackage{tikz, tikz-cd, pgfplots}


%
 
%


\title{On the equality of periods of Kontsevich-Zagier}

\author{Jacky Cresson}
\address{
Laboratoire de Math\'ematiques et de leurs Applications\newline\indent
UMR CNRS 5142\newline\indent
B\^atiment IPRA - Universit\'e de Pau et des Pays de l'Adour\newline\indent
Avenue de l'Universit\'e - BP 1155\newline\indent
64013 Pau, FRANCE
}
\email{jacky.cresson@univ-pau.fr}

\author{Juan Viu-Sos}
\address{
	Dpto. de Matemáticas e Informática,\newline\indent
    ETSI Caminos Canales y Puertos,\newline\indent
    Universidad Politécnica de Madrid,\newline\indent
    C\textbackslash Prof. Aranguren 3, 28040 Madrid, Spain
}
\email{jviusos@math.cnrs.fr}
\urladdr{https://jviusos.github.io/}

\thanks{The second named author is partially supported by the Spanish Government PID2020-114750GB-C32.}

\subjclass[2010]{Primary 11J81, 51M25; Secondary 52B45, 14P10}		

\date{}

\begin{document}

\begin{abstracts}
	\abstractin{english}
	Effective periods were defined by Kontsevich and Zagier as complex numbers whose real and imaginary parts are values of absolutely convergent integrals of $\QQ$-rational functions over $\QQ$-semi-algebraic domains in $\RR^d$. The Kontsevich-Zagier period conjecture states that any two different integral expressions of a period are related by a finite sequence of transformations only using three rules respecting the rationality of functions and domains: integral addition by integrands or domains, change of variables and Stokes' formula.

	In this paper, we introduce two geometric interpretations of this conjecture, seen as a generalization of Hilbert's third problem involving either compact semi-algebraic sets or rational polyhedra equipped with piece-wise algebraic forms. Based on known partial results for analogous Hilbert's third problems, we study possible geometric schemes to prove this conjecture and their potential obstructions.
	\bigskip
	
	\abstractin{french}
	Les périodes effectives furent définies par Kontsevich et Zagier comme étant les nombres complexes dont les parties réelle et imaginaire sont valeurs d'intégrales absolument convergentes de fonctions $\QQ$-rationnelles sur des domaines $\QQ$-semi-algébriques dans $\RR^d$. La conjecture des périodes de Kontsevich-Zagier affirme que si une période admet deux représentations intégrales, alors elles sont reliées par une suite finie d'opérations en utilisant uniquement trois règles respectant la rationalité des fonctions et domaines: sommes d'intégrales par intégrandes ou domaines, changement de variables et formule de Stokes.
	
	Dans cet article, nous introduissons deux interprétations géométriques de cette conjecture, vue comme une généralisation du 3\'eme problème de Hilbert soit pour des ensembles semi-algébriques compacts soit pour des polyèdres rationnels munis d'une forme volume algébrique par morceaux. Basés sur des résultats partiels connus pour des problèmes de Hilbert analogues, nous étudions
	des possibles schémas géométriques pour obtenir une preuve de la conjecture et ses obstructions potentielles.
\end{abstracts}

\selectlanguage{english}
\maketitle

\vspace*{-2em}
\tableofcontents

\pagebreak

\section{Introduction}\label{sec:intro}

Introduced by M.~Kontsevich and D.~Zagier in their paper \cite{KonZag01} in 2001, \emph{periods} are a class of numbers which contains most of the important constants in mathematics. They are strongly related to the study of transcendence in number theory \cite{Wald06}, Galois theory and motives (\cite{Andre04}, \cite{Andre12}, \cite{Ayoub15}) and differential equations~\cite{FisherRivoal14}. We refer to~\cite{Wald15} and~\cite{Muller14} for an overview of the subject.\\

Let $\QQbar$ (resp. $\RRalg$) be the \emph{field of complex} (resp. \emph{real}) \emph{algebraic numbers}. Following the \emph{affine} definition given in~\cite{KonZag01},
\emph{a period of Kontsevich-Zagier} (also called \emph{effective period}) is a complex number whose real and imaginary parts are values of absolutely convergent integrals of rational functions over domains in a real affine space given by polynomial inequalities both with coefficients in $\RRalg$ , i.e. absolutely convergent integrals of the form
\begin{equation}\label{eqn:int_form}
    \Ical(S,P/Q)=\int_S \frac{P(x_1,\ldots,x_d)}{Q(x_1,\ldots,x_d)}\cdot \dint x_1\wedge\cdots\wedge \dint x_d
\end{equation}
where $S\subset\RR^d$ is a $d$-dimensional $\RRalg$-semi-algebraic set and $P,Q\in\RRalg[x_1,\ldots,x_d]$ are coprime. We denote by $\Pkz$ the set of periods of Kontsevich-Zagier and by $\Pkz^\RR=\Pkz\cap\RR$ the set of {\em real periods}. These numbers are \emph{constructible}, in the sense that a period is directly associated with a set of integrands and domains of integrations given by polynomials of rational coefficients.
The set $\Pkz$ forms a constructible countable $\QQbar$-algebra and contains many transcendental numbers such as $\pi$, as well as some interesting classes of numbers e.g.~the {\em multiple zeta values} (see~\cite{Wald00}).

Heuristically, the main point in the {\em philosophy of periods} is the study of transcendental numbers and their relations via integration of algebro-geometric objects with coefficients in $\RRalg$ under a set of transformations rules. One of the main difficulties to deal with such transformations rules is to ensure that they preserve the $\RRalg$ character of the underlying algebro-geometric objects.
In the following, any (semi-)algebraic object is assumed to be defined with coefficients in $\RRalg$.\\

The main conjecture for periods is called the \emph{Kontsevich-Zagier period conjecture} (also \emph{KZ-conjecture}, for brevity's sake), originally stated as follows.

\begin{conj*}[{\cite[Conjecture 1]{KonZag01}}]\label{conj:KZ}
    If a real period admits two integral representations, then we can pass from one formulation to the other using only three operations: integral additions by domains or integrands, change of variables and the Stokes formula. Moreover, these operations should respect the class of objects previously defined.
\end{conj*}

The above three operations are called the {\em KZ-rules}. A common idea around the Kontsevich-Zagier period conjecture is that, in their own words, \emph{``this problem looks completely intractable and may remain so for many years''} (see also~\cite{Wald06, Andre12, Ayoub15}). This is due to different facts, e.g.:
\begin{itemize}
	\item Although this conjecture is natural, its place among classical conjectures in Number Theory is still not well understood.
	This is not the case of the Diophantine conjecture, which is in the same spirit as the KZ-conjecture and deals with relations between multiple zeta values, see~\cite[p. 587]{Wald00}.
	
	\item Up to our knowledge, there is no strategy of proof for the KZ-conjecture.
\end{itemize} 
Nevertheless, there exists a motivic version of the KZ-conjecture, which is better understood and related to the original Grothendieck period conjecture, see~\cite{Andre12}. The case of periods of {\em 1-motives} is succinctly studied by Huber and W\"ustholz in~\cite{HuberWust22:1periods}, proving the motivic conjecture in this context.\\

The aim of this paper is to discuss possible reformulations of this conjecture which suggest a potential scheme of proof. Also, we give some hints about the role that each of the three KZ-rules plays in the conjecture and related problems. The main ingredient is the following algorithmic result by the second author:

\begin{thm*}[Semi-canonical reduction, {\cite[Thm. 1.1]{Viu15}}]\label{thm:semican_form_thm}
  Let $p$ be a non-zero real period given in a certain integral form $\Ical(S,P/Q)$ in $\RR^d$ as in (\ref{eqn:int_form}). There exists an effective algorithm only using the KZ-rules such that $\Ical(S,P/Q)$ can be rewritten as
  \[
     \Ical(S,P/Q) = \sgn(p)\cdot\vol_m(K),
  \]
  where $K\subset\RR^m$ is a compact top-dimensional semi-algebraic set and $\vol_m(\cdot)$ is the canonical volume in $\RR^m$, for some $0<m\leq d+1$.
\end{thm*}

As a consequence, any real period can be written up to sign as the volume of a compact semi-algebraic set. The above theorem can be extended for the whole set of periods $\Pkz\subset\CC$ considering representations of the real and imaginary part respectively. Such a representation for a period $p$ is called \emph{a geometric semi-canonical representation of $p$}.\\

The semi-canonical reduction suggests a more geometrical point of view in order to deal with relations between period representations. Indeed, let us consider the equivalence relation
\begin{equation}
\label{equa1}
\mathcal{I} (S,\omega ) \underset{KZ}{\sim}  \mathcal{I} (\tilde{S},\tilde{\omega} ) ,
\end{equation}
defined between two representations of the same period which can be related by a finite sequence of KZ-rules. By the semi-canonical reduction theorem above, we have
\begin{equation*}
\mathcal{I} (S,\omega ) \underset{KZ}{\sim}  \mathcal{I} (K,1)
\ \ \mbox{\rm and}\ \ 
\mathcal{I} (\tilde{S},\tilde{\omega} ) \underset{KZ}{\sim}  \mathcal{I} (\tilde{K},1) ,
\end{equation*}
so that the equivalence relation (\ref{equa1}) implies that 
\begin{equation*}
\mathcal{I} (K,1 ) \underset{KZ}{\sim}  \mathcal{I} (\tilde{K},1 ) .
\end{equation*}
As a consequence, the Kontsevich-Zagier conjecture is equivalent to :

\begin{conj*} Let $K_1$ and $K_2$ be two compact top-dimensional semi-algebraic sets in $\RR^d$ such that $\vol_d(K_1)=\vol_d(K_2)$. Then we can pass from one formulation to the other one by only using transformations respecting the KZ-rules.
\end{conj*}

This conjecture looks like a kind of {\em Hilbert's third problem} for semi-canonical representations of periods.
However, it is worth noticing that the class of transformations respecting the KZ-rules do not preserve semi-canonical representations of periods as volumes of compact top-dimensional semi-algebraic sets.\\

The previous remark suggests that we should adapt the KZ-rules into our geometrical picture. Taking into account the classical formulation of the third Hilbert's problem~\cite{Cartier86}, we propose the following {\em geometric Kontsevitch-Zagier's conjecture} (or {\em GKZ-conjecture}).

\begin{conj}[GKZ-conjecture]\label{conj:geometric_KZ}
Let $K_1, K_2$ be two compact top-dimensional semi-algebraic sets in $\RR^d$ such that $\vol_d(K_1)=\vol_d(K_2)$. Then, we can transform $K_1$ into $K_2$ only using the following geometric operations:
\begin{itemize}
\item Semi-algebraic scissors congruences, 

\item Algebraic volume-preserving maps respecting the KZ-rules,

\item (Cartesian) product relations.
\end{itemize}
Equivalently, let $\Kcal_0(\Ccal\saq)$ be the Grothendieck ring of top-dimensional compact semi-algebraic sets modulo decompositions and volume-preserving transformations. Then, one has that $[K_1]$ and $[K_2]$ are equal in $\Kcal_0(\Ccal\saq)$, i.e. the morphism $\vol:\Kcal_0(\Ccal\saq)\to\Pkz^\RR$ (associating the class $[K]$ to the volume of $K$) is injective.
\end{conj}

More details on these operations and the equivalent reformulation as a {\em cut-and-paste} problem in a certain Grothendieck ring will be given in Section \ref{sec:reformulation}.\\

The geometric operations involved in the GKZ-conjecture, called the {\em GKZ-rules} in the following, are compatible by definition with the KZ-rules. As a consequence, we have the following relation between the two conjectures:
\begin{equation}
\mathrm{GKZ} \Longrightarrow \mathrm{KZ} .
\end{equation}

Related to the above problem, we state another one based on \emph{piecewise linear} (\PL) geometry, called {\em \PL-geometric Kontsevitch-Zagier's conjecture} (or {\em \PL-GKZ-conjecture}), as follows:

\begin{conj}[\PL-GKZ-conjecture]
\label{conj:PL_KZ}
    Let $P_1$ and $P_2$ be two rational polyhedra in $\RR^d$ equipped respectively with two piecewise algebraic volume forms $\omega_1$ and $\omega_2$ such that $\int_{P_1}\omega_1=\int_{P_2}\omega_2$. Then, we can pass from one integral to the other one only by rational polyhedra scissors-congruences and rational polyhedra transformations.
\end{conj}

The plan of the paper is as follows: we introduce in Section~\ref{sec:reformulation} a first discussion about the nature of Stokes's formula in the original formulation of the Kontsevich-Zagier period conjecture and its possible replacement by the \emph{Fubini} and \emph{Newton-Leibniz} formulas as allowed operations. Then, following the spirit of Kontsevich-Zagier's period conjecture, we present a geometric problem dealing about the smaller class of operations which relates compact semi-algebraic sets of equal volume
(the GKZ-conjecture).

Based on the fact that any compact semi-algebraic set is (semi-algebraically) triangulable, we formulate in Section~\ref{sec:SAStoPL} an analogue of the GKZ-conjecture in \PL-geometry: the generalized Hilbert's third problem for rational polyhedra
(the \PL-GKZ-conjecture).
In this case, the rules of transformation between rational polyhedra have more combinatorial nature. Some known (partial) results on the generalized Hilbert third problem are discussed in Section~\ref{sec:about}, specially in the case of different polyhedra with equal volume with respect to the canonical volume form. In particular, we emphasize here the case of convex polyhedra.

Finally, we discuss in Section~\ref{sec:KZconj_conclu} how the different problems, results and obstructions presented in this article outline a first scheme for either proving the geometric Kontsevich-Zagier conjecture or finding a potential counterexample.

\section{A reformulation of the Kontsevich-Zagier period conjecture}\label{sec:reformulation}

Our aim is to present a modified form of the Kontsevich-Zagier period conjecture from the point of view of volume-preserving operations between compact semi-algebraic sets. We are motivated by the following:
\begin{itemize}
    \item Any operation should be simple and natural from the point of view of integral calculus.
    
    \item Period representations and its operations should be easy to handle and implement.
\end{itemize}

\subsection{Discussion about Stokes' formula versus Fubini's theorem}

Sums by integrand functions or domains and change of variables are natural and induce explicit formulas between integrals. On the other hand, computations using Stokes' formula require an exhaustive analysis in order to determine partitions and (semi-algebraic) parameterizations of the boundary.  Our main concern is then to find an alternative to the use of Stokes' formula, which leads to more tractable manipulations.\\

How such an alternative to Stokes' formula could be determined? An idea is to recover basic operations of integral calculus satisfying our previous constraints and which suffice to prove the Stokes formula. Classically the \emph{Fubini theorem} as well as the \emph{Newton-Leibniz formula} (i.e. \emph{Fundamental theorem of calculus}) are important ingredients of the proof~\cite[p.~253-254]{Spivak79bookI}.\\

Thus, a first natural tentative would be to replace the Stokes formula by Fubini's theorem and the Newton-Leibniz formula. This choice is also motivated by geometric considerations related to the semi-canonical reduction for periods. Indeed, Fubini's theorem is a convenient tool to bring down the dimension of the semi-canonical reduction and this plays an important role in the transcendence of periods, as it is pointed out in Remark~\ref{rem:final_GKZ}. However, this strategy can not be achieved for at least two reasons. First, primitives of algebraic functions are in general transcendental functions. As a consequence, the integral representations obtained by Fubini's theorem are out of the algebraic class. This is illustrated in the following expression:
\begin{equation}\label{eqn:fubini}
    \int_{P\times Q} f(x,y)\ \dint x\dint y=\int_{P}\left[\int_{Q} f(x,y)\ \dint x\right]\dint y
\end{equation}
For example:
\begin{align*}
    \int_{\left\lbrace\substack{0<x<1\\ 1<y<x+1}\right\rbrace} \frac{x}{y}\ \dint x\dint y & = \int_0^1\left(\int_1^{1+x} \frac{x}{y} \ \dint y\right)\dint x\\
							  &=\int_0^1 x\log(x+1)\ \dint x\\
							  &=\frac{1}{4}.
\end{align*}

\begin{rem}
    In~\cite[p.~5]{KonZag01}, Kontsevich and Zagier affirm that an integral of a transcendental function could be a period \emph{``by accident''}, giving as example the following expression:
    \[
        \int_0^1 \frac{x}{\log\frac{1}{1-x}}\ \dint x=\log 2.
    \]
    However, the previous discussion shows that this is not an ``accident'' at all: one can express many periods as integrals of transcendental functions by using Fubini's theorem.
\end{rem}

Secondly, in the case where Fubini's theorem gives an integral in the algebraic class, e.g.~when one takes $f(x,y)=g(x)h(y)$ in~\eqref{eqn:fubini}, this induces potential quadratic relations between periods.

\subsection{A geometric Kontsevich-Zagier problem for periods}\label{subsec:geom_KZ}

Our idea is to take advantage of the geometric representation obtained in the semi-canonical reduction theorem in order to formulate a related geometric problem in terms of volumes of compact semi-algebraic sets. Also, we set a class of allowed operations respecting representation of periods as volumes of compact semi-algebraic sets.\\

Let $\Ccal\saq^d$\label{csaq} be the collection of compact $d$-dimensional semi-algebraic sets in $\RR^d$, containing the empty set and closed under finite unions. %
We define:
\begin{equation}\label{eqn:CAS}
 \Ccal\saq=\bigcup_{d\geq 1}\Ccal\saq^d
\end{equation}
We study the different relations in $\Ccal\saq$ which are possible to obtain with the geometric operations described as follows. Let $(\Kcal_0(\Ccal\saq), +)$ be the free abelian group generated by the classes $[K]$, with $K\in\Ccal\saq$, modulo the relations:
\begin{itemize}
    \item {(\bf Semi-algebraic scissors congruences)} If $K,K'\in\Ccal\saq^d$ such that $\codim(K\cap K')\geq1$, then
    \begin{equation}\label{eqn:SA-scissors}
        [K\cup K'] = [K] + [K'].
    \end{equation}
    
    \item {(\bf Algebraic volume-preserving maps)} Let $K\in\Kcal_0(\Ccal\saq)$. If there exist $U,V\subset\RR^d$ open semi-algebraic sets and $f:U\to V$ an algebraic\footnote{We say that $f$ is \emph{(semi-)algebraic} if its graph $\Gamma_f$ is a (semi-)algebraic set in $U\times V$.} diffeomorphism such that $K\subset U$ and $\det(\Jac(f))=1$, then
    \begin{equation}\label{eqn:SA-volpresmaps}
        [K] = [f(K)].
    \end{equation}
    We denote by $K\simeq_{\vol} K'$ when $K'=f(K)$ by such a volume-preserving map $f$.
\end{itemize}
A natural connection between compact semi-algebraic sets in $\Ccal\saq$ comes from the direct product and Fubini's theorem. We call the following relations that induce a ring structure in $\Kcal_0(\Ccal\saq)$ the \emph{(Cartesian) product relations}:

\begin{itemize}
	\item {(\bf Product relation)} For any $K\in\Ccal\saq^d$ and $K'\in\Ccal\saq^{d'}$, then
	\begin{equation}\label{eqn:SA-product}
		[K\times K']= [K]\cdot [K'].
	\end{equation}

	\item {(\bf Flattening relation)} Let $I=[0,1]$ be the unit interval. For any  $K\in\Ccal\saq^d$ and $n\geq0$, then
		\begin{equation}\label{eqn:SA-flattening}
			[K\times I^n]= [K].
		\end{equation}
\end{itemize}

\begin{rem}\mbox{}
	\begin{enumerate}
	\item The class of $I$ is the neutral element of the product in $\Kcal_0(\Ccal\saq)$ and we denote $1=[I]$. Analogously, the class of the empty set is the neutral element of the sum, denoted $0=[\emptyset]$.
	
	\item It is worth noticing that the point $\RR^0=\{\text{pt}\}$ is not considered as an element of $\Ccal\saq$ in (\ref{eqn:CAS}). Beyond the fact that points have no intrinsic euclidean volume, one can see that any class of a point would define a zero divisor
	since $\{\text{pt}\}\times I\subset\RR^2$ has codimension $1$.
	\end{enumerate}
\end{rem}

\begin{ex}
	Let $a,b\in\RRalg$ be positive. Define the compact semi-algebraic set
	\[
		K_{a,b}=\{(x,y)\in\RR^2 \mid 0\leq x\leq a, 0\leq y+\sqrt{x+1}\leq b\}
	\]
	Then, $K_{a,b}\simeq_{\vol} C_{ab}$, where $C_{ab}=[0,ab]\times I$ is the square with sides of lengths $ab$ and $1$ respectively, via the composition of a linear isometric map of $\RRalg^2$ and the algebraic map $y\mapsto y-\sqrt{x+1}$ defined in the positive semi-plane. In particular,
	\[
		[K_{a,b}]=[C_{ab}]=[I_{ab}],
	\]
	via the flattening relation, where $I_{ab}$ is the closed interval $[0,ab]$ in $\RR^2$.
\end{ex}

The product on $\Kcal_0(\Ccal\saq)$ extends to a particular class of fibrations. Let $E,B,F\in\Ccal\saq$ and $\rho:E\to B$ be a {\em proper} semi-algebraic map, i.e. verifying that $\rho^{-1}(C)\in\Ccal\saq$ for any subset $C$ of $B$ such that $C\in\Ccal\saq$. We say that $\rho:E\to B$ is a {\em piecewise trivial proper fibration with fiber $F$} if there exists a finite partition $B=\bigcup_i C_i$ such that $C_i\in\Ccal\saq$, $\codim(C_i\cap C_j)\geq 1$ and $\rho^{-1}(C_i)\simeq_{\vol} C_i\times F$, for any $i,j$.

We have the following straightforward property.

\begin{propo}
	If $\rho:E\to B$ is a piecewise trivial proper fibration with fiber $F$, then $[E]=[F]\cdot [B]$.
\end{propo}

We extend naturally the volume of semi-algebraic sets into a well-defined ring morphism $\vol:\Kcal_0(\Ccal\saq)\to\RR$ given by $\vol([K])=\vol_d(K)$, where $d$ is the unique natural number such that $K\in\Ccal\saq^d$. By the semi-canonical reduction theorem, 
the previous map induces a surjective morphism
\[ 
\begin{tikzcd}
	\Kcal_0(\Ccal\saq) \arrow{r}{\vol} & \Pkz^\RR.
\end{tikzcd}
\]

In conclusion, the previous relations defining $\Kcal_0(\Ccal\saq)$ can be naturally seen as geometric operations between compact top-dimensional semi-algebraic sets with equal volume and one could ask whether these operations are sufficient to relate them. This is given by the injectivity of the morphism $\vol$, as it is stated in the GKZ-conjecture (Conjecture~\ref{conj:geometric_KZ}) in the Introduction.\\

From the previous discussions and the fact that any integral representation of a non-zero period can be algorithmically reduced to the volume of a compact semi-algebraic set (up to sign), one has the following implication.

\begin{thm}
  The GKZ-conjecture implies the KZ-conjecture.
\end{thm}
A negative answer for the GKZ-conjecture will be also very interesting in order to determine possible obstructions or counterexamples for the KZ-conjecture.\\

The above equivalent geometrical problems posed by the GKZ-conjecture are reminiscent of two classical problems about \emph{cutting-and-pasting} in geometry: Hilbert's third problem (see~\cite{Cartier86}) and Tarski's circle-squaring problem (see~\cite{Laczkovich90}).

Hilbert's third problem deals with how scissor-congruence operations relate $n$-polyhedra of the same volume. The 3-dimensional case was solved in 1900 by Dehn~\cite{Dehn01, Sydler65} who introduced the so-called \emph{Dehn invariant} to distinguish scissor-congruent polyhedra. The main difference with our problem in terms of \emph{cutting-and-pasting} operations is that we work in the semi-algebraic class (with coefficients in $\RRalg$). This is a weaker scissor-congruence constrain than in the original Hilbert problem, where one must stay in the polyhedral class. Also, the class of volume-preserving transformations allowed for each piece is larger than classical isometries on the affine space.

The Tarski circle-squaring problem poses an equivalent question in the context of scissor-congruences between a square and a $2$-disk with equal areas, but without restrictions on the class of possible decompositions. This problem was solved by M.~Laczkovich~\cite{Laczkovich90} in 1990, proving that there exists a decomposition of the circle by non-measurable sets covering the square only using translations. This problem is treated in~\cite{CVS16_2} in the context of our setting, were the semi-algebraic class has strong measurable properties and we face constraints of arithmetic nature due to the choice of coefficients in $\RRalg$.\\

Here, we present our problem in a more modern language, where the construction of $\Kcal_0(\Ccal\saq)$ encodes the geometrical relations in the same way as in algebraic K-theory and Grothendieck rings theory (e.g.~\cite[p.~268]{Cartier86} and~\cite{Zakharevich16}).

There exist a recent cutting-and-pasting problem in the setting of the Grothendieck ring of complex varieties $\Kcal_0(\text{Var}_\CC)$, which is a ring generated by classes $[X]$ of complex varieties modulo isomorphisms, subtractions $[X\setminus F]=[X]-[F]$ by closed subsets $F\subset X$ and the product relation. This problem is known as the \emph{Larsen-Lunts Conjecture}: an equality of classes $[X]=[Y]$ implies that  $X$ and $Y$ admit a decomposition into isomorphic locally closed subvarieties, see~\cite{LarsenLunts14}. This was proved to be true when $\dim X\leq1$ in~\cite{LiuSebag10}, but it turns out to be false in general as a consequence of the existence of zero divisors in $\Kcal_0(\text{Var}_\CC)$, see e.g.~\cite{Borisov18}.

\begin{rem}\label{rem:final_GKZ}~
	\begin{enumerate}
		\item Following Cartier's presentation of Dehn's invariant, it would be interesting to construct a similar invariant for $\Kcal_0(\Ccal\saq)$. A natural question that arises is whether the ring $\Kcal_0(\Ccal\saq)$ is a domain, i.e. if it contains zero divisors as $\Kcal_0(\text{Var}_\CC)$ does, even if intuitively this should not be the case.
		
		\item The study of the \emph{flattering relation} and trivialization by fibrations with fiber $I$ is central in the characterization of ``minimal'' representations of compact semi-algebraic sets in $\Kcal_0(\Ccal\saq)$, i.e. given $[K]$ then to find the minimal $d_0\geq1$ such that one can express $[K]=[\widetilde{K}]$ for some $\widetilde{K}\in\Ccal\saq^{d_0}$. This problem will play a fundamental role in future works concerning \emph{degree theory and transcendence of periods}, see~\cite{CVS16_2}.

		\item It is worth noticing that the general problem of classifying $\RR$-semi-algebraic sets up to $\RR$-semi-algebraic volume-preserving bijections modulo lower-dimensional sets was already considered by A.~Blass and S.~Schanuel in~\cite[p.~3]{BlassSchanuel} as an intractable problem.
	\end{enumerate}	
\end{rem}

\subsection{Comparison with the Ayoub approach}

In~\cite[Conj.~1.1]{Ayoub15}, J.~Ayoub considers
a different approach of the KZ-conjecture by concentrating all the complexity of the period representation on the differential form instead of on the domain. More precisely, he identifies periods as complex integrals over a real unit hypercube as follows:
\begin{equation}\label{eqn:Ayoub}
	\int_{[0,1]^d}{f(z_1,\ldots,z_d)}\dint z_1\cdots\dint z_d,
\end{equation}
where $f$ is a holomorphic function over the closed poly-disk $\mathbb{D}^d=\{|z_i|\leq1\}\subset\CC^d$ and which is algebraic over $\RRalg(z_1,\ldots,z_d)$, see~\cite[\S2.2]{Ayoub14}. Using this representation, he formulates a \emph{compact form} of the Kontsevitch-Zagier conjecture: any relation between periods written as in (\ref{eqn:Ayoub}) is a consequence of linear combinations of expressions as follows:
\begin{equation}\label{eqn:Ayoub_rels}
	\frac{\partial f}{\partial z_i}-f_{|z_i=1}+f_{|z_i=0}.
\end{equation}
Note that one recovers the Stokes formula from the above. Moreover, these expressions are supposed to contain the change of variables relation~\cite[Rem.~1.5]{Ayoub15}. \\

Following our construction of the geometric Kontsevich-Zagier conjecture, we can recast Ayoub's construction as follows: any period $p=\mathcal{I} (S_1,\omega_1 ) + i\cdot\mathcal{I} (S_2,\omega_2 )\in\Pkz$ can be written as the value of a complex integral $\mathcal{I} ([0,1]^n ,\eta_{|\mathbb{D}^d} )$, where $\eta_{|\mathbb{D}^d}$ is a top-dimensional complex holomorphic form over $\mathbb{D}^d$. As for the GKZ-conjecture, the KZ-rules are adapted to this new setting by specifying Ayoub's rules (or \emph{AKZ-rules}) as in (\ref{eqn:Ayoub_rels}).

Nevertheless, at this moment it is not known by the authors if there exists an algorithmic way to pass from an expression (\ref{eqn:int_form}) to another of type (\ref{eqn:Ayoub}) corresponding to the same effective period, as one has for the semi-canonical representation.

A \emph{relative version} of the Kontsevich-Zagier conjecture is proved by the same author~\cite[Thm.~4.25]{Ayoub15}, exhibiting explicitly the ``geometric'' relations between Laurent series where periods appear as coefficients of the form (\ref{eqn:Ayoub}). Roughly speaking, these relations are linearly generated by expressions of type (\ref{eqn:Ayoub_rels}) adapted for such Laurent series
and by functions which have zero-valued integrals (\ref{eqn:Ayoub}).\\

Both our approach of the KZ-conjecture as well as the one proposed by Ayoub deal with the following problem: to describe equivalence classes of volume elements on a given manifold under the action of volume-preserving maps, as studied by Moser in~\cite{Moser65}.

It is worth noticing that one of the results in Moser's article could potentially give a solution of the conjecture following Ayoub's point of view: it is proved in~\cite[Lemma 2]{Moser65} that there exists a change of variables relating two integrals of differentiable functions over the unit hypercube with equal total volume. Moreover, such a change of variables is explicitly constructed in the proof, but it turns to be \emph{transcendental} in most of the cases, since the construction is made by using integration inductively over each variable.\\

\section{From Semi-algebraic to Piecewise Linear geometry}\label{sec:SAStoPL}

We consider an problem in the \emph{Piecewise Linear} (\PL) class analogous to the previous one. The interest of this new problem is twofold. First, we can make a connection between the \PL~and semi-algebraic cases. Second, we can take advantage of known results in \PL-geometry related to our problem.

Up to now we have not considered all the good properties of semi-algebraic sets. In particular, it is well-known that compact semi-algebraic sets admit ``good'' triangulations, which allows us to obtain a representation in the \PL~category. Using this result, called \emph{Semi-algebraic Triangulation Theorem}, we obtain a ``\PL~analogue'' of the the geometric Kontsevich-Zagier problem.

\subsection{Semi-algebraic triangulations}

Let $a_0,\ldots,a_k$ be $k+1$ points affinely independent in $\RR^d$, for $k\geq0$. Recall that a \emph{$k$-simplex} $[a_0,\ldots,a_k]$ is the set of points $p\in\RR^d$ such that there exist non-negative $\lambda_0,\ldots,\lambda_k\in\RR$ verifying that $\sum_{i=1}^k\lambda_i=0$ and $p=\sum_{i=0}^k\lambda_ia_i$. In this case, the numbers $(\lambda_0,\ldots,\lambda_k)$ are called the \emph{barycentric coordinates} of $p$. For any non-empty subset $\{a_{i_0},\ldots,a_{i_\ell}\}\subset\{a_0,\ldots,a_k\}$, the $\ell$-simplex $[a_{i_0},\ldots,a_{i_\ell}]$ is called a $\ell$-\emph{face} of $[a_0,\ldots,a_k]$. If $\sigma$ is a simplex, we denote by $\mathring{\sigma}$ the \emph{open simplex}, composed by the points of $\sigma$ whose barycentric coordinates are all positive.

A \emph{finite simplicial complex} of $\RR^d$ is a collection of simplices $\Kcal=(\sigma_i)_{i=1,\ldots,m}$ verifying that the faces of every $\sigma_i$ belong to $\Kcal$ and such that $\sigma_i\cap\sigma_j$ is either empty or a common face of $\sigma_i$ and $\sigma_j$, for every $1\leq i<j\leq m$. The \emph{realization} of the complex is $|\Kcal|=\bigcup_{i=1}^m\sigma_i$. Note that the open simplices $\mathring{\sigma}_i$ form a partition of $|\Kcal|$.

Following e.g.~\cite[Sec. 9.2]{BCR98}, one has the following.
\begin{thm}[Semi-algebraic Triangulation Theorem]
    Every compact semi-algebraic set $K$ in $\RR^d$ is \emph{semi-algebraically triangulable}, i.e. there exists a finite simplicial complex $\Kcal=(\sigma_i)_{i=1,\ldots,p}$ and a semi-algebraic homeomorphism $\Phi: |\Kcal|\to K$.
\end{thm}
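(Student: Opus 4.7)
The plan is to prove this by induction on the ambient dimension $d$, using cylindrical algebraic decomposition (CAD) as the fundamental tool. The base case $d=1$ is immediate: a compact semi-algebraic subset of $\RR$ is a finite union of points and closed intervals with algebraic endpoints, so the simplicial complex consists of $0$-simplices at the endpoints and $1$-simplices joining them, with $\Phi$ the identity (after isolated-point inclusions).

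For the inductive step, the first step is to choose a coordinate system in which the projection $\pi:\RR^d\to\RR^{d-1}$ onto the first $d-1$ coordinates is \emph{generic with respect to $K$} (i.e. the boundary polynomials defining $K$ are in ``general position'' so that their leading coefficient with respect to $x_d$ is non-vanishing on each cell). Such a direction exists by a standard density argument on $O(d,\RRalg)$. Next, apply CAD to obtain a finite partition of $\RR^d$ into semi-algebraic cells compatible with $K$, whose images under $\pi$ partition a semi-algebraic set $B\supseteq\pi(K)$ into cells in $\RR^{d-1}$. Over each cell $C\subset\RR^{d-1}$ that meets $\pi(K)$, the intersection $\pi^{-1}(C)\cap K$ is a finite union of graphs and ``bands'' of the form $\{\xi_i(x')\le x_d\le\xi_{i+1}(x')\}$, where $\xi_1<\cdots<\xi_r$ are continuous semi-algebraic functions on $\overline{C}$ (the Thom-type section functions attached to $K$ over $C$).

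The second step is to apply the inductive hypothesis to the compact semi-algebraic set $\pi(K)\subset\RR^{d-1}$, producing a finite simplicial complex $\Lcal$ and a semi-algebraic homeomorphism $\Psi:|\Lcal|\to\pi(K)$. By refining $\Lcal$ if necessary (subdividing each closed simplex $\tau$ so that $\Psi(\mathring\tau)$ lies inside a single CAD cell of the base), the section functions $\xi_i\circ\Psi$ become continuous semi-algebraic on each closed simplex $\tau$ of $\Lcal$. For each $\tau\in\Lcal$ I then build the ``prism'' $\tau\times[0,1]$ in $\RR^d$, subdivide each prism into $(\dim\tau+1)$-simplices by the standard prism triangulation (choosing coherent orderings on vertices so adjacent prisms glue), and define the homeomorphism on $\tau\times[0,1]$ by sending $(t,s)$ to the point of $K$ over $\Psi(t)$ whose $x_d$-coordinate linearly interpolates between the relevant sections $\xi_i(\Psi(t))$ and $\xi_{i+1}(\Psi(t))$. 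Concatenating these vertical interpolations across all bands and graph layers over $\tau$ yields a semi-algebraic homeomorphism from the total prism complex onto $\pi^{-1}(|\Lcal|)\cap K=K$.

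The main obstacle will be the \emph{coherent gluing} of the triangulations built over different base simplices: whenever two simplices $\tau,\tau'\in\Lcal$ share a face $\tau\cap\tau'$, the induced triangulations of the two prisms must agree on the common vertical wall. This is handled by fixing once and for all a total order on the vertices of $\Lcal$ and using it to choose the prism subdivision compatibly, but it forces an initial barycentric subdivision of $\Lcal$ and careful bookkeeping of which section functions meet which cells. A secondary technical point is to verify that the piecewise-defined map $\Phi$ is globally a homeomorphism: injectivity and continuity follow cell-by-cell from the strict ordering $\xi_i<\xi_{i+1}$ on open cells and from the matching of boundary values, while the semi-algebraicity of $\Phi$ follows because its graph is a finite union of sets defined by polynomial equalities and inequalities in the $\xi_i$ (which themselves have semi-algebraic graphs). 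Compactness of $K$ ensures the complex is finite, completing the induction.
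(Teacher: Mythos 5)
The paper does not prove this statement at all: it is quoted as a known result with a pointer to Bochnak--Coste--Roy \cite[Sec.~9.2]{BCR98}, so what you have written is essentially an attempt to reconstruct the standard textbook proof (induction on $d$ via a well-chosen projection and cylindrical decomposition). The overall strategy is the right one, but as written it has two genuine gaps. The first is the step ``by refining $\mathcal{L}$ if necessary (subdividing each closed simplex $\tau$ so that $\Psi(\mathring\tau)$ lies inside a single CAD cell of the base)''. The preimages $\Psi^{-1}(C)$ of the base cells are merely semi-algebraic subsets of $|\mathcal{L}|$; they are not subpolyhedra, so no simplicial subdivision of $\mathcal{L}$ will make every open simplex map into a single cell. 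The standard remedy is to strengthen the statement being proved by induction: one proves that every compact semi-algebraic set admits a semi-algebraic triangulation \emph{compatible with any prescribed finite family of semi-algebraic subsets}, and in the inductive step one applies this stronger hypothesis to $\pi(K)$ together with the (finitely many) projected cells and the sets over which the number of section functions changes. Without carrying this compatibility through the induction, your construction over the base does not get off the ground.

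The second gap is in the vertical part of the construction. Over the closure $\overline{C}$ of a base cell, consecutive section functions may coincide on boundary faces ($\xi_i=\xi_{i+1}$ there), so the map $(t,s)\mapsto\bigl(\Psi(t),(1-s)\,\xi_i(\Psi(t))+s\,\xi_{i+1}(\Psi(t))\bigr)$ collapses positive-dimensional faces of the prism $\tau\times[0,1]$ onto lower-dimensional sets; it is then surjective onto the closed band but not injective, and your appeal to ``strict ordering on open cells plus matching of boundary values'' does not rescue injectivity. This degeneracy is precisely why the classical proof does not triangulate prisms and map them over, but instead builds the simplices of the cylinder directly from lifted vertices (with the bookkeeping of which sections merge over which faces), or equivalently works with prisms having some vertical edges collapsed. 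Relatedly, you assert that the $\xi_i$ are continuous on $\overline{C}$; this is not automatic from an arbitrary cylindrical decomposition and is exactly what the choice of a ``good'' direction of projection (together with compactness of $K$) is needed for, so it deserves a lemma rather than a parenthesis. With the strengthened induction hypothesis and the corrected treatment of degenerate sections, your sketch becomes the proof in \cite[Sec.~9.2]{BCR98}; as it stands, both points are real obstructions rather than routine details.
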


It is important to notice that this triangulation procedure is algorithmic (see~\cite[Sec.~5.7, p.~183]{BPR06}) and moreover that any operation used in this procedure belongs to the KZ-rules. In particular, every stratum of the decomposition is obtained by a {\em Nash embedding}~\cite[Remark 9.2.5]{BPR06}. In addition, Ohmoto and Shiota proved in~\cite{OhmotoShiota15} that this triangulation can be chosen such that the semi-algebraic homeomorphism $\Phi: |\Kcal|\to K$ is of class $C^{1}$, i.e. it has an extension to a $C^{1}$-mapping defined on an open semi-algebraic neighborhood of $|\Kcal|$ in $\RR^d$. However this construction is not defined algorithmically {\em a priori}.

\begin{lem}\label{lem:SAtoPL}
    For any $K\in\Ccal\saq^d$, there exist a rational polyhedron $P$ in $\RR^d$ and a piecewise algebraic volume form%
    \footnote{%
    This notion was introduced in Kontsevich-Soibelman's construction of the de Rham homotopy theory of semialgebraic sets~\cite{KonSoi00,HLTV11}, or more precisely the study of {\em PA forms} on {\em PA spaces}. In our setting, this means a continuous semi-algebraic differential form.
    } %
    $\omega$ such that
    \[
        \vol_d(K)=\int_P \omega.
    \]
    Moreover, the passage between the two integrals respects the KZ-rules.
\end{lem}

\begin{proof}
    It follows from~\cite[Thm~1.1]{OhmotoShiota15} that there exists a semi-algebraic triangulation $(\Kcal,\Phi)$ of $K$ such that $\Phi: |\Kcal|\to K$ is of class $C^{1}$. Then, the pull-back $\Phi^*$ is well defined over $K$ and taking $P=|\Kcal|$ we have:
    \[
	    \vol_d(K)=\int_K \omega_0=\int_P \Phi^*(\omega_0),
    \]
    where $\omega_0$ is the canonical volume form in $\RR^d$. The form $\omega=\Phi^*(\omega_0)$ can be expressed as
    $\dint f_1\wedge\cdots\wedge\dint f_d$ for some $\Ccal^1$ semi-algebraic maps $\{f_i\}_{i=1}^d$. Since locally we have only performed piecewise algebraic change of variables over semi-algebraic decompositions of $K$, the result follows.
\end{proof}

\subsection{A \PL~version of the geometric Kontsevich-Zagier problem}

The previous lemma leads us to formulate an analogous version of the geometric Kontsevich-Zagier problem in terms of \PL-geometry, the \PL-GKZ-conjecture (Conjecture~\ref{conj:PL_KZ}), presented in the Introduction. In the following, we call {\em rational polyhedra transformation} to any \PL-map between two rational polyhedra.

Due to the similarity between
the GKZ-conjecture and the \PL-GKZ-conjecture, it seems that we have not gained any advantage from the new formulation. Moreover, it is worth noticing that the (cartesian) product relation~\eqref{eqn:SA-product} does not appear naturally in this setting and we are fixing the dimension of our polyhedra.

However, both rational polyhedra scissors-congruence and rational polyhedra transformations induce an equivalent GKZ-rule in the compact semi-algebraic setting.

\begin{propo}\label{propo:PLtoSA}
    Let $K$ and $K'$ be top-dimensional compact semi-algebraic sets in $\RR^d$ and consider $(P,\omega)$ and $(P',\omega')$ two pairs obtained by applying Lemma~\ref{lem:SAtoPL} on $K$ and $K'$, respectively. Then:
    \begin{enumerate}
      \item Any rational polyhedra scissors-congruence operation on $P$ induces a semi-algebraic decomposition on $K$.
      \item Any rational polyhedra transformation between $(P,\omega)$ and $(P',\omega')$ induces a semi-algebraic volume-preserving homeomorphism between $K$ and $K'$.
    \end{enumerate}
\end{propo}

\begin{proof}
  Consider the semi-algebraic triangulation $\Phi: P\to K$ of class $C^{1}$. We can express $w=\dint f_1\wedge\cdots\wedge\dint f_d$ for some $\Ccal^1$ semi-algebraic maps $\{f_i\}_{i=1}^d$ such that $\Phi=(f_1,\ldots,f_d)$.\\%
  Case (1) is straightforward: without loss of generality, assume that the scissors-congruence operation on $P$ produces two top-dimesional polyhedra $P_1$ and $P_2$ both equipped with $\omega$ as differential form. Consider $K_1$ and $K_2$ as the images of $\Phi_{|P_1}$ and $\Phi_{|P_2}$, respectively. Then $K$ decomposes semi-algebraically on $K_1$ and $K_2$ verifying that $\vol_d (K) = \vol_d(K_1) + \vol_d(K_2)$, since the push-forward $\Phi_*(\omega)$ is the canonical volume form in $\RR^d$.\\
  For the case (2), consider $L:P\to P'$ a \PL-isomorphism such that $L_*(\omega)=\omega'$. Also, let $\Phi': P'\to K'$ be a semi-algebraic triangulation of class $C^{1}$ of $K'$ such that $\omega'$ is the pull-back of the canonical volume form by $\Phi'$. Defining the semi-algebraic map $\Psi=\Phi'\circ L\circ \Phi^{-1}$, one has that $K'=(\Phi'\circ L)(P)=\Psi(K)$. Also, by construction:
  \[
    \vol_d (K) = \int_P \omega = \int_{P'} \omega'=\vol_d(K').
  \]
\end{proof}

\begin{thm}
  If there exists $d_0\geq1$ such that for any $d\geq d_0$ the \PL-GKZ-conjecture for $d$-dimensional polyhedra holds, then the KZ-conjecture is true.
\end{thm}

\begin{proof}
  Let $\Ical(S_1,\omega_1)$ and $\Ical(S_2,\omega_2)$ be two integral representations of the same period $p$ and consider $K_1$ and $K_2$ the respective semi-canonical reductions associated to them. Taking $m=\max\{\dim K_1, \dim K_2, d_0\}$, there exist natural numbers $n_1,n_2\geq0$ such that $m=\dim K_1 + n_1 = \dim K_2 + n_2$. Considering the following compact semi-algebraic sets:
  \[
    \tilde{K}_1 = K_1\times I^{n_1}\quad\text{ and }\quad \tilde{K}_2 = K_2\times I^{n_2},
  \]
  it is straightforward that $\vol_m (\tilde{K}_1)=\vol_m (\tilde{K}_2) =p$. Using Lemma~\ref{lem:SAtoPL}, we obtain two pairs $(P_1,\omega^{PA}_1)$ and $(P_2,\omega^{PA}_2)$ verifying the hypotheses of the \PL-GKZ-conjecture for $m$-dimensional polyhedra. By hypothesis, one can pass from $\int_{P_1}\omega_1^{PA}$ to $\int_{P_2}\omega_2^{PA}$ only by a finite sequence of rational polyhedra scissors-congruence and rational polyhedra transformations. By Proposition~\ref{propo:PLtoSA}, this induces a finite sequence of KZ-rules from $K_1$ to $K_2$. Thus, the KZ-conjecture holds.

\end{proof}

The above geometric problem about periods belongs now to the study of discrete and polyhedral geometry, for which many powerful algorithmic methods already exist to deal with scissor-congruences. We refer to the book of I.~Pak~\cite{Pak15} for an overview on the state of art of discrete and polyhedral geometry together with combinatorial methods.

In the next section, we give some hints about why the previous problem could have a positive answer. However, we are still far away from constructing a complete proof, and this is related to the following fact: we are using some classical results for which the current known proof is unsatisfying in our setting.\\

We exhibit again in this other approach how essential is the decomposition-on-domains (or the scissors-congruence) operation in the different conjectures presented in this article. If one removes the scissors-congruence operation, then the main idea to find a counterexample to the \PL-GKZ-conjecture is to construct two \PL-manifolds in the arithmetic class which are homeomorphic but not \PL-homeomorphic. %
However, as a straightforward consequence of the {\em Hauptvermutung} for compact semi-algebraic sets~\cite[Cor~4.3]{ShiotaYokoi84} proved by M.~Shiota and M.~Yokoi, one has the following obstruction.

\begin{propo}
  If a period admits two representations by polyhedra $P_1$ and $P_2$ as in %
  the \PL-GKZ-conjecture %
  such that $P_1$ is homeomorphic to $P_2$ but not \PL-homeomorphic, then there is no global (volume-preserving) semi-algebraic mapping between $P_1$ and $P_2$.
\end{propo}

The above result is due to the fact that triangulations of compact semi-algebraic sets are unique up to \PL-homeomorphism. As a consequence, there is no global volume-preserving \PL-mapping between the above polyhedra. This follows also from  Milnor, Kirby and Siebenmann's example~\cite{Milnor61,KirSieb69}.

\section{About volumes of rational polyhedra, scissor-congruences and mappings}\label{sec:about}

In this section, we exhibit some known partial results about the problem of constructing transformations between two polyhedra equipped with volume forms which have the same total volume.

\subsection{Canonical volume form}
Before discussing the general case, a first simplification of
the \PL-GKZ-conjecture
is to consider the polyhedra equipped with the canonical volume form.

\subsubsection{A general result}
In that case, we have found an interesting result discussed by A.~Henriques and I.~Pak in~\cite{HenriquesPak04} which gives a priori a positive answer to the problem. The strategy is first to look at this problem restricted to convex polytopes and then to reduce the general case using appropriate decompositions by convex parts.

\begin{thm}[{\cite[Example~18.3, p.~171]{Pak15}}]\label{thm:pak15}
    Let $P,Q\subset\RR^d$ be two rational polyhedra of equal volume, i.e. $\vol_d(P)=\vol_d(Q)$. Then one can pass from $P$ to $Q$ only using rational polyhedra scissors-congruences and rational polyhedra transformations.
\end{thm}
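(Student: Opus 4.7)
The plan is to follow the strategy hinted at in the lines preceding the statement: reduce the general case to rational simplices by triangulation, then reduce each rational simplex to a rational box of the same volume, and finally match the resulting families of boxes using rational volume-preserving affine maps. The first step is standard: using rational triangulations of $P$ and $Q$, which exist because both are rational polyhedra, I would write $P=\bigcup_i \sigma_i$ and $Q=\bigcup_j \tau_j$ with $\sigma_i,\tau_j$ rational $d$-simplices meeting in codimension at least one. This uses only rational polyhedral scissors-congruence.

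The crucial technical lemma is then the following: every rational $d$-simplex $\sigma$ of volume $v$ is equivalent, in the sense of the problem, to a rational $d$-box of volume $v$. For $d=2$ this is a rational version of the Bolyai--Gerwien procedure: cut the triangle along the midsegment, rotate the top piece onto the bottom to form a parallelogram, and shear the parallelogram into a rectangle, choosing all maps with rational coefficients and determinant $1$. For the inductive step, one foliates $\sigma$ by hyperplanes parallel to a face, cuts it into finitely many rational prismatic slabs of the form $(\text{rational }(d-1)\text{-simplex})\times[0,h]$, and applies the induction hypothesis fiberwise. The essential point is that allowing arbitrary rational volume-preserving affine maps, in particular rational shears, kills the Dehn-type obstructions that make the classical Hilbert third problem hard, and also allows one to normalize any rational simplex to a standard corner simplex of the same volume by a single rational map of determinant $1$.

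Once the lemma is in hand, each $\sigma_i$ and each $\tau_j$ is replaced by a rational box, so the problem reduces to: two finite collections of rational boxes with the same total rational volume are equivalent. Two single rational boxes $B_1=\prod_i[0,a_i]$ and $B_2=\prod_i[0,b_i]$ of equal volume are related by the single diagonal linear map $(x_1,\ldots,x_d)\mapsto((b_1/a_1)x_1,\ldots,(b_d/a_d)x_d)$, whose determinant $\prod(b_i/a_i)$ equals $1$ since $\prod a_i=\prod b_i$; this is rational and volume-preserving. Since $\vol_d(P)=\vol_d(Q)$, a common rational subdivision of the two collections into boxes of matching volumes pairs them up and finishes the proof. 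The main obstacle I expect is the simplex-to-box reduction: although the existence of \emph{some} rational affine map of determinant $1$ between two rational simplices of equal volume is elementary, producing an explicit rational scissors-congruence to a box requires an inductive construction whose combinatorics become involved already in dimension three, and the bookkeeping needed to keep every cut and every affine piece defined over $\QQ$ is where almost all the technical work lies.
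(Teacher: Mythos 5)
Your overall scheme (triangulate, reduce each rational simplex to a rational box of the same volume, then match boxes by rational diagonal maps after a common volume refinement) is reasonable, and the endgame is fine: volumes of rational simplices and boxes are rational, so the two families of boxes can be refined by coordinate cuts at rational heights into pieces with matching volumes, and two rational boxes of equal volume are related by a rational diagonal map of determinant $1$. But the crux of your argument --- the simplex-to-box lemma --- has a genuine gap in the inductive step. Slicing a $d$-simplex by finitely many hyperplanes parallel to a facet does \emph{not} produce pieces of the form $(\text{rational }(d-1)\text{-simplex})\times[0,h]$: each slab is a frustum, the convex hull of two homothetic but non-congruent parallel cross-sections, and no finite family of parallel cuts turns it into a product. (Making the slabs ``infinitesimally thin'' is precisely the continuous transport idea behind Moser's theorem, not a finite scissors-congruence.) So as written the induction does not get off the ground for $d\geq 3$, which is exactly where all the difficulty of the statement sits; the $d=2$ Bolyai--Gerwien step you describe is correct (the $180^{\circ}$ rotation and the shear are rational and unimodular). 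The lemma itself is believable --- under rational volume-preserving affine maps on pieces the Dehn-type obstructions do vanish, and scissors congruence under the unimodular affine group is classified by volume --- but that requires a real argument (e.g.\ staircase decompositions of prisms plus a stable-to-actual scissors-congruence step, or citing the affine scissors-congruence theory), not the parallel-slab cut.

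For comparison, the paper does not argue through simplices and boxes at all: it reduces to \emph{convex} pieces, refines the two convex decompositions so that corresponding pieces have matching volumes $\alpha_i\beta_j$ (the Henriques--Pak subdivision), and then invokes their theorem that any two convex rational polytopes of equal volume are related by a single rational, continuous, piecewise linear, volume-preserving (Monge) map; that theorem rests on Moser's theorem and is non-constructive in dimension $\geq 3$, with an explicit construction only for convex polygons. So your route, if the simplex-to-box reduction were carried out correctly over $\QQ$, would actually be more elementary and constructive than the argument the paper relies on; but at present the step that would replace the appeal to Moser's theorem is missing.
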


It is worth noticing that this result only deals with periods coming from volumes of rational polyhedra, i.e. $\QQ\oplus i\QQ\subset\Pkz$. Nevertheless, the main difference here is that we allow an operation which does not appear in the original Hilbert third problem: rational polyhedra transformations. The importance of allowing such type of operations is clear, since the original Dehn invariant becomes an obstruction otherwise.

The main ingredient of the proof given for the latter is the following result:

\begin{thm}[{\cite[Theorem~2]{HenriquesPak04}}]\label{thm:henripak04}
    Let $P,Q\subset\RR^d$ be two convex rational polyhedra of equal volume, i.e. $\vol_d(P)=\vol_d(Q)$. Then there exists a one-to-one rational map $f:P\to Q$, which is continuous, piecewise linear and volume-preserving.
\end{thm}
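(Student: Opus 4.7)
The natural approach is induction on the dimension $d$, proceeding by slicing. For the base case $d=1$, both $P$ and $Q$ are closed intervals of equal rational length, and a rational translation is a rational PL volume-preserving bijection.

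For the inductive step, assume the result in dimension $d-1$. After applying a rational volume-preserving linear change of coordinates in general position with respect to the facets of $P$ and $Q$, fix the projection $\pi(x) = x_d$ and consider the horizontal slices $P_t = P \cap \pi^{-1}(t)$ and $Q_s = Q \cap \pi^{-1}(s)$, which are convex rational polytopes in $\RR^{d-1}$. The ambition is to build the desired global map in the form $F(x,t) = (\psi_t(x), \phi(t))$, where $\phi$ is a suitable reparametrization of the $x_d$-axis and $\psi_t : P_t \to Q_{\phi(t)}$ is furnished by the inductive hypothesis applied to slices of matching $(d-1)$-volume.

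The hard part is making $F$ simultaneously rational, piecewise linear, and volume-preserving. If $\phi$ matches cumulative volumes $\alpha(t) = \vol_d(P \cap \{x_d \le t\}) = \beta(\phi(t))$, then $\phi$ is generically algebraic of degree $d$; if instead $\phi$ equates slice-volumes $\vol_{d-1}(P_t) = \vol_{d-1}(Q_{\phi(t)})$, then the Jacobian $\det DF = \phi'(t)$ is not identically $1$, so $F$ fails to be volume-preserving. The way around this is to reduce both $P$ and $Q$ first to a common \emph{prismatic normal form} by a finite sequence of rational PL volume-preserving moves: a finite union of rational prisms $S_i \times [0, h_i]$ on which the slice family is affinely trivial, so that the inductive hypothesis applied to the bases $S_i$, combined with rational linear rescalings along the factors $[0, h_i]$, yields the map on each prism. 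The core combinatorial step is therefore the prismatic decomposition of an arbitrary convex rational polytope, which one expects to prove by an induction on combinatorial complexity, leveraging the elementary fact that any two rational $d$-simplices of equal volume are related by a unique rational affine volume-preserving map; the real technical work lies in matching triangulations of $P$ and $Q$ so that the resulting slabs pair up correctly, and in controlling compatibility of slice maps across shared faces while staying inside the class of rational PL volume-preserving transformations.
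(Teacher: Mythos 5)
Be aware that the paper does not prove this statement at all: it is quoted from Henriques--Pak, and the paper's own commentary records that the known proof ``relies fundamentally on Moser's theorem about volume-preserving maps for manifolds, which is not constructive'', and that obtaining a combinatorial proof of the Moser statement for convex polytopes is explicitly an \emph{open problem} (\cite[Sec.~8.7]{HenriquesPak04}); only in the plane does one have the explicit Monge-map construction recalled in the paper. Your slicing induction is therefore an attempt at precisely the combinatorial proof that is not known, and the entire difficulty has been displaced into the step you never carry out: the reduction of an arbitrary convex rational polytope, by rational \PL~volume-preserving homeomorphisms, to a ``prismatic normal form''. That reduction is essentially equivalent to the theorem itself (apply the theorem with $Q$ a rational box, which is a prism), so invoking it as a lemma without proof is circular. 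The two failure modes you diagnose honestly --- matching cumulative volumes forces a reparametrisation $\phi$ that is algebraic of degree $d$ and hence not \PL, while matching slice volumes ruins the Jacobian --- are exactly the known obstructions, and the prism idea does not visibly circumvent them: a $d$-simplex is not a union of prisms with affinely trivial slice families unless one first applies shears that are themselves part of what must be constructed, and the maps produced prism by prism must in addition agree on shared facets for $F$ to be continuous and one-to-one, a constraint that volume preservation on the individual pieces does not impose and that your sketch does not address (the same issue makes the fibrewise maps $\psi_t$ of the first attempt a priori neither continuous nor \PL~in $t$).

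Two smaller points. The ``elementary fact'' you lean on is half right: two rational $d$-simplices of equal volume are indeed related by a rational affine volume-preserving map (send vertices to vertices; the determinant has absolute value $1$ and a transposition fixes the sign), but this map is far from unique, and more importantly it cannot be chosen compatibly across the faces of two triangulations in general --- this is where the real content lies, and it is the reason Henriques--Pak resort to Moser's theorem (smooth, non-constructive) followed by a \PL~approximation, with rationality obtained afterwards by a perturbation argument, rather than by any face-by-face matching of simplices. So your proposal is not a proof, and it differs from the actual route (Moser's theorem plus \PL~and rational approximation) precisely by omitting the analytic input that the known argument cannot do without; making your outline work would amount to settling the open problem the paper cites, which would in itself be a noteworthy contribution.
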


Note that no decomposition of the polyhedra is used in the previous result, due to the restrictive hypothesis in this setting. 
However, the proof of the latter result relies fundamentally on \emph{Moser's theorem}~\cite{Moser65}
about volume-preserving maps for manifolds, which is not constructive. In fact, to obtain a combinatorial proof of Moser's Theorem for convex polyhedra is still an open problem, see~\cite[Sec.~8.7, p.~17]{HenriquesPak04}. Pak refers to continuous, piecewise linear and volume-preserving maps as \emph{Monge maps} (see~\cite[Sec.~18.1, p.~170]{Pak15}).

We give an illustration of Theorem~\ref{thm:pak15} for the case of convex plane polygons in the following. The main point is that in this case the Monge maps are explicitly constructed and one sees that these maps are rational. We then discuss how Theorem~\ref{thm:pak15} (non-convex case) can be derived from Theorem~\ref{thm:henripak04} (convex case).

\subsubsection{Convex polygons}

Restricting to polygons in the real plane, Pak describes a simple way to obtain a Monge map between two convex polygons of same area~\cite[Ex.~18.2, p.~170]{Pak15}. The idea is to reduce any convex polygon into a triangle, by a recursive series of continuous, piecewise linear and volume-preserving transformations displacing vertex in order to reduce the number of faces of the polygon.

Let $P$ be a polygon of $n> 3$ faces and consider a vertex $v\in P$. Take the triangle $(uvw)$ contained in $P$ and formed by $v$ and its neighbors $u$ and $w$. Let $z$ be the second neighboring vertex of $w$ other than $v$. We are going to transform the triangle $(uvw)$ into another one $(uv'w)$ by shifting $v$ along the parallel line to $(uw)$ passing through $v$, such that the new vertex $v'$ lies in the line $(wz)$. This induces a global transformation keeping the rest of the polygon $P\setminus (uvw)=P\setminus (uv'w)$ unchanged (see Figure~\ref{fig:pak_ex}). Remark that we have obtained a polygon of $(n-1)$ faces and also that this transformation is continuous and piecewise linear. In fact, the triangles $(uvw)$ and $(uv'w)$ have the same area since we are not modifying neither the base of the triangle (lying in $(uw)$) nor the height (because $v$ and $v'$ lie in the same parallel line to $(uw)$). Thus, this is also a volume-preserving map.
\begin{figure}[ht]
	\centering
		\begin{tikzpicture}[scale=0.9, xscale=0.8]
		\begin{scope}[shift={(-5,2)}]
				\node (u) at (-2,1) {}; \node[left] at (u) {$u$};
				\node (v) at (-1,2) {}; \node[above] at (v) {$v$};
				\node (w) at (2,1) {}; \node[right] at (w) {$w$};
		 		\node (vp) at (3,2) {}; \node[above] at (vp) {$v'$};  \draw[fill=black] (vp) circle (0.3ex);
				\node (z) at (1,0) {}; \node[below] at (z) {$z$};
				\node (t) at (-1,0) {}; \node[below] at (t) {$t$};
		 		\node (tp) at (0.5,-0.5) {}; 
				\draw[dashed] (-2,2) -- (3.5,2); \draw[dashed] (tp) -- (3.5,2.5);
		 		\draw[fill=blue!20,semithick] (u.center) -- (v.center) -- (w.center) -- (z.center) -- (t.center) -- (u.center); 					\draw  (u.center) -- (w.center);
		\end{scope};
		
		\draw[->] (-1,3) -- (1,3); 
		
		\begin{scope}[shift={(4,2)}]
				\node (u) at (-2,1) {}; \node[left] at (u) {$u$};
				\node (w) at (2,1) {}; \node[right] at (w) {$w$};
		 		\node (vp) at (3,2) {}; \node[above] at (vp) {$v'$};
				\node (z) at (1,0) {}; \node[below] at (z) {$z$};
				\node (t) at (-1,0) {}; \node[below] at (t) {$t$};
		 		\node (tp) at (0.5,-0.5) {}; 
		 		\draw[fill=blue!20,semithick] (u.center) -- (vp.center) -- (w.center) -- (z.center) -- (t.center) -- (u.center); 				\draw  (u.center) -- (w.center);
		\end{scope};
		
		\begin{scope}[shift={(-5,-2)}]
				\node (u) at (-2,1) {}; \node[left] at (u) {$u$};
				\node (w) at (2,1) {};
		 		\node (vp) at (3,2) {}; \node[above] at (vp) {$v'$};
				\node (z) at (1,0) {}; \node[below] at (z) {$z$};
				\node (t) at (-1,0) {}; \node[below] at (t) {$t$};
		 		\node (tp) at (0.5,-0.5) {}; \node[below] at (tp) {$t'$};  \draw[fill=black] (tp) circle (0.3ex);
				\draw[dashed] (-2,1/3) -- (1,-2/3); \draw[dashed] (0,-1) -- (3.5,2.5);
		 		\draw[fill=blue!20,semithick] (u.center) -- (vp.center) -- (w.center) -- (z.center) -- (t.center) -- (u.center); 
				\draw  (u.center) -- (z.center);
		\end{scope};
		
		\draw[->] (-1,-1) -- (1,-1);
		
		\begin{scope}[shift={(4,-2)}]
				\node (u) at (-2,1) {}; \node[left] at (u) {$u$};
				\node (w) at (2,1) {};
		 		\node (vp) at (3,2) {}; \node[above] at (vp) {$v'$};
				\node (z) at (1,0) {}; \node[below] at (z) {$z$};
		 		\node (tp) at (0.5,-0.5) {}; \node[below] at (tp) {$t'$};
		 		\draw[fill=blue!20,semithick] (u.center) -- (vp.center) -- (w.center) -- (z.center) -- (tp.center) -- (u.center); 
				\draw  (u.center) -- (z.center);
		\end{scope};
		\end{tikzpicture}
	\caption{Illustration of the Monge map described by Pak.}\label{fig:pak_ex}
\end{figure}
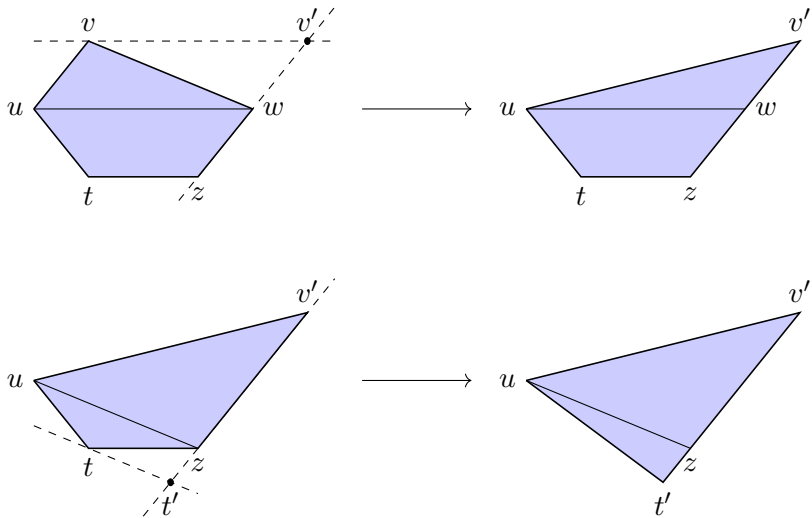

It is worth noticing that the previously described transformation respects the rationality of the polygon. Repeating this procedure $n-3$ times, we can transform $P$ on to a triangle $\triangle_1$ of equal area by a Monge map $\zeta_1:P\to\triangle_1$.

Let $Q$ be a polygon with same area of $P$. Repeating the previous procedure on $Q$, we obtain a triangle $\triangle_2$ and a Monge map $\zeta_2:Q\to\triangle_2$. As $\triangle_1$ and $\triangle_2$ have the same area by hypothesis, there exists a volume-preserving linear map $\phi:\triangle_1\to\triangle_2$. Thus, by composing the above functions we obtain a Monge map $\zeta=\zeta_2^{-1}\circ\phi\circ\zeta_1:P\to Q$ between $P$ and $Q$.

\begin{rem}
    The previous procedure is not unique and depends on many choices during the process.
\end{rem}

\subsubsection{Non-convex case}\label{subsec: canonical_no_convex}
In the non-convex case, the idea is first to decompose each polyhedron in the same number of convex pieces $P=P_1\cup\cdots\cup P_n$ and $Q=Q_1\cup\cdots\cup Q_n$ with respective volumes $\alpha_i = \vol_d(P_i)$ and $\beta_j = \vol_d(Q_j)$. Then, we can produce partitions of $P$ and $Q$ with second subdivisions $P=\bigcup_{i,j=1}^n P_{i,j}$ and $Q=\bigcup_{i,j=1}^n Q_{i,j}$ such that $\vol_d(P_{i,j})=\vol_d(Q_{i,j})=\alpha_i\beta_j$, for any $i,j=1,\ldots,n$. Such a subdivision is detailed in~\cite[Sec.~1, p.~2]{HenriquesPak04}. Moreover, such a decomposition can be made in order to preserve rationality. Applying the previous theorem at each pair of convex pieces, the result holds. Note that this proof is essentially constructed by using two allowed transformation rules: scissor-congruence and decomposition.

It is worth noticing that the previous results are still non-constructive in the general case, and this fact comes entirely from the convex case in dimension strictly greater than two.

\subsection{The general case}

Now, let us come back to the original setting, where the polyhedra $P$ and $Q$ are equipped with two piecewise algebraic volume forms $\omega_1$ and $\omega_2$ verifying $$\int_{P_1}\omega_1=\int_{P_2}\omega_2.$$

Up to our knowledge, the following result due to G.~Kuperberg in~\cite[Thm.~3]{Kuperberg96} is the most general result where non-canonical volume forms are considered. This result is formulated and proved in~\cite[Theorem~1]{HenriquesPak04} in a more suitable way with respect to our problem.

\begin{thm}\label{thm:vol_PL}
    Let $M_1,M_2\subset\RR^d$ be two \PL-manifolds, possibly with boundary, which are \PL-homeomorphic and equipped with piecewise constant volume forms $\omega_1$ and $\omega_2$. Suppose that $M_1$ and $M_2$ have equal volume, i.e. $\int_{M_1}\omega_1=\int_{M_2}\omega_2$. Then there exists a volume-preserving \PL-homeomorphism $f:M_1\to M_2$, in particular $f^*(\omega_2)=\omega_1$.
\end{thm}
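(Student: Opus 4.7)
\textbf{The plan} is a piecewise-linear analogue of Moser's trick: first reduce to the problem of matching two piecewise constant forms of equal total volume on a single PL-manifold, and then construct the required homeomorphism as a sequence of local volume-redistribution moves.

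\textbf{Step 1 (Reduction to one manifold).} Fix a PL-homeomorphism $g \colon M_1 \to M_2$, which exists by hypothesis. The pulled-back form $g^{*}\omega_2$ is piecewise constant on $M_1$ and has the same total integral as $\omega_1$. It is enough to construct a volume-preserving PL-homeomorphism $h \colon M_1 \to M_1$ with $h^{*}(g^{*}\omega_2) = \omega_1$, for then $f := g \circ h$ does the job. After passing to a common refinement, I may assume that both $\omega_1$ and $\omega := g^{*}\omega_2$ are constant on every top-dimensional simplex of a single triangulation $\mathcal{T}$ of $M_1$.

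\textbf{Step 2 (Local transfer move).} For each pair of adjacent top simplices $\sigma, \sigma' \in \mathcal{T}$ sharing a codimension-one face $\tau$, the bipyramid $\sigma \cup \sigma'$ is a convex polytope with a distinguished interior face $\tau$. Sliding $\tau$ while fixing the outer boundary $\partial(\sigma \cup \sigma')\setminus \tau$ produces a PL-homeomorphism of the bipyramid that transfers any prescribed signed amount of $\omega_1$-mass between $\sigma$ and $\sigma'$; composing with an affine rescaling inside each simplex allows the corresponding $\omega$-masses to be adjusted in the same way. The move is supported in $\sigma \cup \sigma'$ and is the identity elsewhere on $M_1$.

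\textbf{Step 3 (Global assembly).} On each connected component of $M_1$ the dual graph of $\mathcal{T}$ is connected, so the discrete transportation problem of adjusting the $\omega_1$-masses of the top simplices to equal the $\omega$-masses has a solution: the two mass distributions have the same total, hence a standard flow argument on the dual graph produces an edge-flow that realises the required transfer. Composing the local moves of Step 2 along this flow yields a PL-homeomorphism $h$ after which $\omega$ and $h^{*}(\omega_1)$ assign the same mass to every top simplex. A final affine adjustment inside each simplex then aligns the two constant forms pointwise, giving the identity $h^{*}(g^{*}\omega_2) = \omega_1$.

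\textbf{Main obstacle.} The hard part is the gluing: each local move distorts a codimension-one face, so consecutive moves on overlapping bipyramids can interfere, and the output must remain a well-defined PL-homeomorphism with the correct behaviour on $\partial M_1$. I would try to resolve this by ordering the moves along a spanning tree of the dual graph, so that each codim-one interface is modified only once, and by further refinement of $\mathcal{T}$ to create ``buffer'' simplices near $\partial M_1$. A cleaner alternative, followed by Kuperberg and Henriques--Pak, packages everything through the convex case of Theorem~\ref{thm:henripak04}, which in turn invokes the smooth Moser theorem; a fully combinatorial PL Moser argument on simplicial complexes is exactly the delicate ingredient flagged as open in the comment following Theorem~\ref{thm:henripak04}, which is why I expect this step to be the genuine source of difficulty.
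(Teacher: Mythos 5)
Your proposal should first be set against what the paper actually does: the paper does not prove Theorem~\ref{thm:vol_PL} at all, but quotes it from Kuperberg and from Henriques--Pak, whose argument reduces to the convex, canonical-volume case (Theorem~\ref{thm:henripak04}) and hence ultimately to Moser's smooth theorem; the paper's remark even stresses that the published argument is only a sketch in places. So your attempt at a direct combinatorial ``PL Moser'' proof is a genuinely different route --- but as written it has a real gap, and the gap sits exactly where the known difficulty lives.

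The problem is the last sentence of Step 3. Equalizing the masses $\int_\sigma$ of the two forms over every top simplex of $\mathcal{T}$ does not make the forms equal pointwise: each local move of Step 2 is only \emph{piecewise} affine on the two simplices it touches, so after composing the moves the pulled-back form $h^*(g^*\omega_2)$ is constant only on a strictly finer subdivision of each $\sigma\in\mathcal{T}$, and no ``affine adjustment inside each simplex'' can pull a non-constant piecewise-constant density back to a constant one. What you need at that stage is, for every top simplex, a PL self-homeomorphism fixing its boundary that straightens a piecewise constant form of the correct total mass to the constant form --- i.e.\ precisely the local PL Moser statement that Kuperberg and Henriques--Pak extract from Moser's (smooth, non-constructive) theorem, and whose combinatorial proof is the open problem recalled after Theorem~\ref{thm:henripak04}. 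So the crux is deferred, not resolved; your own closing paragraph essentially concedes this by falling back on the cited route. Secondary issues: the union of two adjacent top simplices need not be a convex polytope (in the plane, a non-convex quadrilateral cut along its interior diagonal is a counterexample), a single slide of the shared facet can transfer only strictly less mass than the donor simplex currently carries, so the dual-graph flow needs the capacity and ordering analysis you only gesture at, and the moves must be arranged to restrict correctly to $\partial M_1$. None of these is obviously fatal, but together with the missing local straightening lemma they leave the proof incomplete precisely at its hard point.
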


\begin{rem}
    In fact, Pak and Henriques only give a sketch of proof of the above theorem, saying that it follows more or less directly from the canonical volume form case. However, certain parts need to be detailed, in particular the one analogous to the construction named in~Section~\ref{subsec: canonical_no_convex}, and dealing with two simplicial decompositions for $P$ and $Q$ by pieces $\{P_{i,j}\}_{i,j=1}^n$ and $\{Q_{i,j}\}_{i,j=1}^n$ such that $\int_{P_{i,j}}\omega_1=\int_{Q_{i,j}}\omega_2$ for any $i=1,\ldots,n$.
    
    From our point of view, the above equality between values of integrals express exactly a equality between periods! This is in fact one of the main points of the Kontsevich-Zagier conjecture and other related problems in periods theory: to compare periods between them from integral expressions~\cite[Sec.~1.2]{KonZag01}.
\end{rem}

Looking at the hypotheses of Theorem~\ref{thm:vol_PL}, we first remark that they are very strong. As long as we are looking for \emph{global} continuous, piecewise-linear, volume-preserving maps, one can not avoid the \PL-homeomorphism condition. With respect to our problem, we can not guarantee that this condition is satisfied in general. Moreover, the volume forms which appear from our construction are a priori non piecewise constant.\\

The first condition can be removed by restricting our attention to convex polyhedra. Indeed, we have the following:

\begin{lem}[{\cite[Lemma~1.1]{HenriquesPak04}}]
    For any two convex polyhedra $P,Q\subset\RR^d$ there exists a \PL-homeomorphism $f:P\to Q$.
\end{lem}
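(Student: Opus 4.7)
The plan is to construct $f$ as a \emph{radial extension} from interior points. Fix $p_0\in\mathrm{int}(P)$ and $q_0\in\mathrm{int}(Q)$. Since both $P$ and $Q$ are convex, every point of $P$ (resp.\ $Q$) lies on a unique segment $[p_0,x]$ with $x\in\partial P$ (resp.\ $[q_0,y]$ with $y\in\partial Q$). If I can produce a \PL-homeomorphism $\psi:\partial P\to\partial Q$ between the boundaries, then the formula
\[
   f\bigl(p_0+t(x-p_0)\bigr)=q_0+t\bigl(\psi(x)-q_0\bigr), \qquad x\in\partial P,\ t\in[0,1],
\]
defines a homeomorphism $f:P\to Q$. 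The key observation is that if $\psi$ is simplicial with respect to some triangulations $\Sigma_P$ of $\partial P$ and $\Sigma_Q$ of $\partial Q$, then on each cone $p_0*\sigma$ (with $\sigma$ a top-dimensional simplex of $\Sigma_P$) the map $f$ is a linear bijection onto $q_0*\psi(\sigma)$, so it is \PL\ globally.

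The entire problem therefore reduces to producing the \PL-homeomorphism $\psi:\partial P\to\partial Q$ between the two boundary $(d-1)$-spheres. I would proceed by induction on $d$. The case $d=1$ is immediate, since $P$ and $Q$ are then segments. For the inductive step, each facet of $P$ is a convex $(d-1)$-polyhedron and therefore \PL-homeomorphic to the standard $(d-1)$-simplex by the induction hypothesis; the analogous fact holds for $Q$. Thus both $\partial P$ and $\partial Q$ are naturally assembled from \PL-balls glued along common ridges which are, in turn, convex polyhedra of still lower dimension and hence \PL-balls by induction. The induction reduces $\psi$ to a combinatorial equivalence between appropriate refinements of these triangulations of $S^{d-1}$.

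The main obstacle will be this last combinatorial step: producing a common refinement (or a chain of refinements) identifying the triangulations of $\partial P$ and $\partial Q$ as abstract simplicial complexes. In dimension $d-1\leq 3$ this is the classical Hauptvermutung (Alexander--Moise) and presents no difficulty. In higher dimension one uses the fact that boundaries of convex polytopes are shellable (Bruggesser--Mani), which allows one to connect any two such triangulations of $S^{d-1}$ by a finite sequence of bistellar (Pachner) moves; each such move can be \PL-realized on a fixed \PL-sphere, so we obtain the desired $\psi$ by composing the corresponding elementary \PL-homeomorphisms. Composing with the interior radial extension described above then produces the global \PL-homeomorphism $f:P\to Q$.
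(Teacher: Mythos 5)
Note first that the paper does not prove this lemma at all: it is quoted from \cite{HenriquesPak04}, so the only question is whether your argument stands on its own. Your first step does: coning a \PL-homeomorphism $\psi:\partial P\to\partial Q$ from interior points $p_0,q_0$ is a correct and standard reduction. After choosing triangulations making $\psi$ simplicial, each join $p_0*\sigma$ is a genuine simplex (the affine hull of a top-dimensional $\sigma\subset\partial P$ is a supporting hyperplane, hence misses $p_0$), the join of affine maps is affine, and convexity guarantees each ray from $p_0$ (resp.\ $q_0$) meets the boundary exactly once, so $f$ is a \PL\ bijection. So the lemma is reduced to showing that $\partial P$ and $\partial Q$ are \PL-homeomorphic, i.e.\ that each is a \PL\ standard $(d-1)$-sphere.

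That is where the gap is, because this reduction is essentially the whole content of the lemma and your treatment of it does not close. The inductive paragraph --- facets are \PL\ balls by induction, so the boundaries are ``assembled from \PL-balls glued along \PL-balls'', hence $\psi$ reduces to ``a combinatorial equivalence between refinements'' --- does no work: knowing that the pieces are \PL\ balls says nothing about how they are glued, and producing a common combinatorial model of the two boundary spheres is precisely the problem, not a reduction of it. The high-dimensional citation chain is also logically off. Pachner's bistellar-move theorem is an equivalence whose \emph{hypothesis} is that the two triangulated manifolds are \PL-homeomorphic, so it cannot be the engine that produces $\psi$. What Bruggesser--Mani shellability actually buys you is, via the Danaraj--Klee theorem, that a shellable triangulated closed pseudomanifold is a combinatorial (\PL\ standard) sphere; applying this to shellable triangulations of $\partial P$ and $\partial Q$ (for non-simplicial polytopes one must also check that a shellable triangulation of the boundary complex exists, e.g.\ a pulling triangulation) gives directly that both boundaries are \PL-homeomorphic to $\partial\Delta^d$, with no bistellar moves needed. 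With that corrected citation your scheme can be completed, but be aware of what it costs: Danaraj--Klee rests on Newman's theorem and regular-neighborhood theory, so you are in effect quoting a statement equivalent to the lemma rather than proving it. The economical route --- and the standard one in \PL\ topology, which is what the citation to \cite{HenriquesPak04} implicitly relies on --- is a direct induction on dimension using the join structure of convex polytopes, showing that every compact convex $d$-polytope is a \PL\ $d$-ball (see Rourke--Sanderson), which needs neither the Hauptvermutung, nor shellability, nor Pachner moves.
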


In this direction, we obtain a more satisfying result:

\begin{thm}\label{thm:vol_PL_convex}
    Let $P,Q\subset\RR^d$ be two convex polyhedra equipped with piecewise constant volume forms $\omega_1$ and $\omega_2$. Suppose that $P$ and $Q$ have equal volume, i.e. $\int_{P}\omega_1=\int_{Q}\omega_2$. Then there exists a volume-preserving \PL-homeomorphism $f:P\to Q$, in particular $f^*(\omega_2)=\omega_1$.
\end{thm}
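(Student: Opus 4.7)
The plan is to deduce the statement as an immediate corollary of Theorem~\ref{thm:vol_PL} by using the preceding lemma to supply the only hypothesis not already assumed in the present theorem. The hypotheses of Theorem~\ref{thm:vol_PL} are: (i) the two objects are \PL-manifolds with boundary; (ii) they are \PL-homeomorphic; (iii) they carry piecewise constant volume forms; (iv) of equal total volume. Hypotheses (iii) and (iv) are given to us verbatim, so the argument reduces to checking (i) and (ii).

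For (i), I would invoke the standard fact that a compact convex polyhedron $P\subset\RR^d$ of full dimension is a \PL-manifold with boundary, \PL-homeomorphic to the $d$-ball, with $\partial P$ a \PL-sphere of dimension $d-1$. This can be verified by fixing any simplicial subdivision of $P$ compatible with its face lattice: the star of each interior vertex is a \PL-ball and the star of each boundary vertex is a \PL-half-ball, and so $P$ acquires the structure of a \PL-manifold with boundary. For (ii), I would apply the immediately preceding Lemma~1.1 of~\cite{HenriquesPak04}, which produces a \PL-homeomorphism $g:P\to Q$ between any two full-dimensional convex polyhedra in $\RR^d$. With (i)--(iv) in hand, Theorem~\ref{thm:vol_PL} applied to $M_1=P$, $M_2=Q$ yields a volume-preserving \PL-homeomorphism $f:P\to Q$, i.e.\ $f^*(\omega_2)=\omega_1$, which is exactly the statement to prove.

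The main difficulty here is not in this deduction, which is essentially a one-line check, but is fully absorbed into Theorem~\ref{thm:vol_PL} itself, whose justification in the piecewise constant, non-canonical setting ultimately relies on Moser-type (non-constructive) arguments and, moreover, on the subdivision construction evoked in the remark after Theorem~\ref{thm:vol_PL}, whose complete detailed write-up is missing in the literature. Consequently, the proof obtained by this reduction does not produce an explicit Monge map, and in particular it does not preserve rationality; this is precisely the obstacle that would have to be overcome to apply the statement in the arithmetic framework of the Kontsevich--Zagier problem.
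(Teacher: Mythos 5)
Your proposal is correct and follows exactly the route implicit in the paper: the convexity assumption supplies the \PL-homeomorphism hypothesis via Lemma~1.1 of~\cite{HenriquesPak04}, and the conclusion then follows by applying Theorem~\ref{thm:vol_PL}. Your additional remarks (that convex polyhedra are \PL-balls, and that the argument inherits the non-constructive, non-rational character of Theorem~\ref{thm:vol_PL}) are consistent with the paper's own discussion.
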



\section{Conclusion}\label{sec:KZconj_conclu}

We have presented two geometric stronger versions of the Kontsevich-Zagier period conjecture which take place in different settings (Grothendieck rings, K-theory, \PL-geometry, volumes and forms on convex polyhedra,...) where there already exists some similar tools to work on possible schemes of proof and obstructions.

The latter open different ways to approach this problem, generalizing some of the previous results in more general settings for constructing a complete proof. On the other hand, the considerable quantity of obstructions which exist in other similar scissor-congruence problems give some evidence to believe that the Kontsevitch-Zagier period conjecture could be, in fact, false.

The GKZ-conjecture %
presents an algebraic setting to work on the construction of obstructions and geometric invariants. This could lead to possible counterexamples, in the spirit of other cutting-and-pasting problems. The main goal here is to develop a Dehn-like invariant for general compact semi-algebraic sets.

Concerning
the \PL-GKZ-conjecture, even if the type of result that Theorem~\ref{thm:vol_PL_convex} represents is exactly in the spirit of our problem, the gap between this result and a complete answer in the \PL~case is large due to the following reasons:
\begin{itemize}
    \item We think that the most difficult problem to deal with is to extend the previous theorem for volume forms in the algebraic class.
    
    \item We would like to avoid the convexity assumption. In this case, the situation seems more tractable following the same strategy as for canonical volume forms. Indeed, the setting of our problem allows scissors-congruences between the polyhedra and not only global volume-preserving \PL-homeomorphism: it seems reasonable that we could give decompositions of non-convex polyhedra in convex parts with a predefined volume and then to "compare" these parts one by one.
\end{itemize}

Due to the connection between the Kontsevich-Zagier period conjecture,
the GKZ-conjecture and the \PL-GKZ-conjecture, the above describes the main problems to be solved in order to advance in the knowledge of relations between periods from both new settings.

Recently, J.~Commelin, P.~Habegger and A.~Huber have obtained in~\cite{CHH:exponentialperiodsI} a ``semi-canonical representation'' type of result in the context of exponential periods and o-minimality. Using this, it would be interesting to understand the previous problems and obstructions in the case of exponential periods.


\section*{Acknowledgments}

The authors would like to strongly thank Professors Michel Waldschmidt and Pierre Cartier for all the very helpful discussions and ideas. The second author would like to thank the first one for his constant encouragement and support during his Ph.D.

\bigskip
\bigskip
\bibliographystyle{alpha}
\bibliography{biblio}

\end{document}